\theoremstyle{plain}
\newtheorem{theorem}{Theorem}[section]
\newtheorem{lemma}[theorem]{Lemma}
\newtheorem{proposition}[theorem]{Proposition}
\newtheorem{corollary}[theorem]{Corollary}
\theoremstyle{definition}
\newtheorem{definition}[theorem]{Definition}
\newtheorem{problem}[theorem]{Problem}
\newtheorem{remark}[theorem]{Remark}
\DeclareMathOperator{\cl}{{\rm cl}}
\begin{document}
\title{A quasi-metrization theorem for hybrid topologies on the real line}
\author{Tom Richmond and Eliza Wajch\\
Department of Mathematics, \\Western Kentucky University\\Bowling Green, KY 42101, USA\\ email: tom.richmond@wku.edu\\
Institute of Mathematics, \\Siedlce University of Natural Sciences and Humanities,\\ 3. Maja 54, 08-110 Siedlce, Poland.\\
e-mail: eliza.wajch@gmail.com}

\maketitle
\begin{abstract}
Hybrid topologies on the real line have been studied by various authors.  Among the hybrid spaces, there are also Hattori spaces. However, some of the hybrid spaces are not homeomorphic to Hattori spaces. In this article, a common generalization of at least four kinds of the hybrid topologies on the real line is described. In the absence of the axiom of choice, a quasi-metrization theorem for such hybrid spaces is proved. It is shown that Kofner's quasi-metrization theorem for generalized ordered spaces is false in every model of $\mathbf{ZF}$ in which there exists an infinite Dedekind-finite subset of the real line.\medskip

\noindent\emph{Mathematics Suject Classification}: 54E35, 54A35, 54E15

\noindent\emph{Keywords:} Real line, Sorgenfrey line, Hattori space, hybrid topology, quasi-metric, quasi-metrizability.
\end{abstract}

\section{Introduction}
\label{Intro}

In this article, a 4-\emph{cover} of $\mathbb{R}$ is a collection $\mathcal{A}=\{A_1,A_2, A_3, A_4\}$ of pairwise disjoint subsets of $\mathbb{R}$ such that $\mathbb{R}=A_1\cup A_2\cup A_3\cup A_4$. Given a 4-cover $\mathcal{A}=\{A_1,A_2, A_3, A_4\}$ of $\mathbb{R}$, for $x\in\mathbb{R}$, we define the family $\mathcal{B}(x)$ as follows:
\[
\mathcal{B}(x)=\begin{cases} \{(x-\epsilon, x+\epsilon): \epsilon>0\} &\text{ if } x\in A_1;\\
\{\{x\}\} &\text{ if } x\in A_2;\\
\{[x, x+\epsilon): \epsilon>0\} &\text{ if } x\in A_3;\\
\{(x-\epsilon, x]: \epsilon>0\} &\text{ if } x\in A_4.\end{cases}
\]
There exists the unique topology $\tau_{\mathcal{A}}$ on $\mathbb{R}$ such that, for every $x\in\mathbb{R}$, the family $\mathcal{B}(x)$ is a local base at $x$ in $\langle \mathbb{R}, \tau_{\mathcal{A}}\rangle$. We call  $\tau_{\mathcal{A}}$ the \emph{hybrid topology determined by} $\mathcal{A}$. The topological space $H_4(\mathcal{A})=\langle\mathbb{R}, \mathcal{A}\rangle$ will be called the \emph{hybrid space determined by} $\mathcal{A}$. Similar general constructions, leading to the concept of a generalized ordered space, appeared for instance, in \cite{B}, \cite{Kof}, \cite{Lutz1} and \cite{Lutz}. Of course, $H_4(\mathcal{A})$ is a $T_3$-space and a generalized ordered space (see \cite{B}, \cite{Kof}, \cite{Lutz1}, \cite{Lutz}). We notice that if $A_2\cup A_3=\emptyset$, the space $H_4(\mathcal{A})=\langle\mathbb{R}, \mathcal{A}\rangle$ is the Hattori space $H(A_1)$. If $A_1=\mathbb{Q}$, $A_4=\mathbb{R}\setminus\mathbb{Q}$ (in this case, $A_2=A_3=\emptyset$), our $H_4(\mathcal{A})$ is the Engelking-Lutzer line (see \cite{EnL} and \cite[p.122]{Kof}).

We recall that Hattori spaces (called also $H$-spaces) were investigated, for instance, in \cite{BS}--\cite{ChH2} \cite{Hatt}, \cite{Kul}, \cite{Rich1}. If $A_1\cup A_2=\emptyset$, the space $J(A_3)$ in \cite{Rich1} is our $H_4(\mathcal{A})$. The space $J(\mathbb{R}\setminus\mathbb{Q})$ appeared in \cite[pp. 48, 166, 244,
281]{Rich2}. If $A_3\cup A_4=\emptyset$, the space $D_1(A_2)$ in \cite{Rich1} is our $H_4(\mathcal{A})$. If $A_1\cup A_4=\emptyset$, the space $D_2(A_2)$ in \cite{Rich1} is our $H_4(\mathcal{A})$. If $A_1\cup A_2\cup A_4=\emptyset$, the space $H_4(\mathcal{A})$ is the Sorgenfrey line, as usual, denoted by $\mathbb{S}$. If $A_1\cup A_2\cup A_3=\emptyset$, we denote $H(\mathcal{A})$ by $\mathbb{S}^{\leftarrow}$. Of course, the spaces $\mathbb{S}$ and $\mathbb{S}^{\leftarrow}$ are homeomorphic.

The set-theoretic framework for this work is the Zermelo-Fraenkel system $\mathbf{ZF}$. We recall that the axiom of choice $\mathbf{AC}$ is not an axiom of $\mathbf{ZF}$; in consequence, our reasoning in $\mathbf{ZF}$ may require more subtle arguments than in $\mathbf{ZFC}$ and, moreover, some known $\mathbf{ZFC}$-theorems on generalized ordered spaces cannot be applied to our work because they may fail in $\mathbf{ZF}$. 

The main aim of this article is to give a quasi-metrization theorem for hybrids of type $H_4(\mathcal{A})$ in the absence of the axiom of choice (see Theorems \ref{s2t5}, \ref{s2t8} and Corollary \ref{s2c09}). We also prove that Kofner's quasi-metrization theorem (see \cite[Theorem 10]{Kof}) is false if there exists an infinite Dedekind-finite subset of the real line (see Definition \ref{s2d11} and Theorem \ref{s2t11}). Finally, we give a sufficient condition for $H_4(\mathcal{A})$ to be metrizable in $\mathbf{ZF}$ (see Propositions \ref{s2p13}--\ref{s2p14}) and notice that $H_4(\mathcal{A})$ is both normal and completely regular in $\mathbf{ZF}$ (see Proposition  \ref{s2p15}).

Given a topological space $\mathbf{X}$ with the underlying set $X$, we denote by $\tau[\mathbf{X}]$ the topology of $\mathbf{X}$; hence, $\mathbf{X}=\langle X, \tau[\mathbf{X}]\rangle$. For a set $A\subseteq X$, $\cl_{\mathbf{X}}(A)$ and $\cl_{\tau[\mathbf{X}]}(A)$ denote the closure of $A$ in $\mathbf{X}$. 

Usually, if it is not stated otherwise, we denote topological spaces with boldface letters and their underlying sets with lightface letters. We may also denote the topology of $\mathbf{X}$ by $\tau$, writing $\mathbf{X}=\langle X, \tau\rangle$. 

In particular, the topology of the Sorgenfrey line is denoted by $\tau[\mathbb{S}]$. The natural topology of the real line $\mathbb{R}$, determined by the standard absolute value on $\mathbb{R}$, is denoted by $\tau[\mathbb{R}]$. For simplicity, if this is not confusing, we call the space $\langle\mathbb{R}, \tau[\mathbb{R}]\rangle$ simply the real line and denote it by $\mathbb{R}$.

All topological notions used in this paper, if not introduced here, are standard and can be found in \cite{En} or \cite{Will}. As usual, $\omega$ stands for the set of all finite ordinal numbers (of von Neumann). For our convenience, we denote by $\mathbb{N}$ the set $\omega\setminus\{0\}$ where $0=\emptyset$. For every ordinal $\alpha$, $\alpha+1=\alpha\cup\{\alpha\}$. We treat $\omega$ as the set of all non-negative integers in $\mathbb{R}$. The set of all rational numbers is denoted by $\mathbb{Q}$.

\section{The main results}

Let us recall some basic definitions to discuss conditions on $\mathcal{A}$ to get the quasi-metrizability of $H_4(\mathcal{A})$. 

\begin{definition}
\label{s2d1}
(Cf. \cite[pp. 3, 129]{FL}, \cite[Definition 1.10, p. 488]{Gr}, \cite{Wils}.)
A \emph{quasi-metric} (respectively, \emph{non-archimedean quasi-metric}) on a set $X$ is a function $\rho: X\times X\to [0, +\infty)$ satisfying the following conditions:
\begin{enumerate}
\item[(i)] $(\forall x, y\in X)(\rho(x,y)=0\leftrightarrow x=y)$;
\item[(ii)] $(\forall x,y,z\in X)\rho(x,y)\leq \rho(x,z)+\rho(z,y)$\\
 (respectively, $(\forall x,y,z\in X) \rho(x,y)\leq \max\{\rho(x,z), \rho(z,y)\}$).
\end{enumerate}
\end{definition}

Given a quasi-metric $\rho$ on a set $X$, a point $x\in X$ and a real number $r>0$, we put $B_{\rho}(x,r)=\{y\in X; \rho(x,y)<r\}$. We denote by $\tau(\rho)$ the topology on $X$ having the collection $\{B_{\rho}(x,r): x\in X\wedge r>0\}$ as a base. We call $\tau(\rho)$ the \emph{topology induced by} $\rho$. The quasi-metric $\rho$ is a metric if it satisfies the condition: $(\forall x,y\in X) d(x,y)=d(y,x)$.

\begin{definition}
\label{s2d2}
(Cf. \cite[pp. 3, 129]{FL}, \cite[Definition 10.1, p. 488]{Gr}, \cite{Wils}.)
\begin{enumerate}
\item[(i)] A topological space $\langle X, \tau\rangle$ is called (\emph{non-archimedeanly}) \emph{quasi-metri\-zable} if there exists a (non-archimedean) quasi-metric $\rho$ on $X$ such that $\tau=\tau(\rho)$.
\item[(ii)] If $\langle X,\tau \rangle$ is a quasi-metrizable space, then every quasi-metric $\rho$ on $X$ such that $\tau=\tau(\rho)$ is called a \emph{quasi-metric for} $\langle X, \tau\rangle$. 
\end{enumerate}
\end{definition}

Some authors use different definitions of a quasi-metric, not necessarily equivalent to Definition \ref{s2d1}. For instance, in \cite{Rich1}, condition (i) of Definition \ref{s2d1} is replaced with the following: $(\forall x,y\in X)(\rho(x,y)=\rho(y,x)=0\leftrightarrow x=y)$. However, in this paper, similarly to \cite{FL} and \cite{Gr}, we follow Wilson's terminology from \cite{Wils} and use the term ``quasi-metric'' only in the sense of Definition \ref{s2d1}.

\begin{definition}
\label{s2d3}
(Cf. \cite[pp. 29]{FL}, \cite[p. 489]{Gr}.)
 A family $\mathcal{U}$ of open subsets of a topological space $\mathbf{X}$ is called: 
\begin{enumerate}
\item[(i)] \emph{interior-preserving} if, for every non-empty subfamily $\mathcal{V}$ of $\mathcal{U}$, the intersection $\bigcap\mathcal{V}$ is open in $\mathbf{X}$;
\item[(ii)] $\sigma$-\emph{interior-preserving} if there exists a family $\{\mathcal{U}_n: n\in\omega\}$ such that $\mathcal{U}=\bigcup_{n\in\omega}\mathcal{U}_n$ and, for every $n\in\omega$, $\mathcal{U}_n$ is interior-preserving in $\mathbf{X}$.
\end{enumerate}
\end{definition}

Let us notice that the following well-known theorem is provable in $\mathbf{ZF}$:

\begin{theorem}
\label{s2t4}
(Cf. \cite[p. 151]{FL}, \cite[Theorem 10.3, p. 490]{Gr}.) $[\mathbf{ZF}]$ A $T_1$-space is non-archimedeanly quasi-metrizable if and only if it has a $\sigma$-interior-preserving base. 
\end{theorem}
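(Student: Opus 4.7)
The plan is to establish both directions of the equivalence by an explicit construction, verifying throughout that no appeal to any choice principle is needed (so that everything goes through in $\mathbf{ZF}$).

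First I would handle the easy direction: suppose $\rho$ is a non-archimedean quasi-metric on $X$ with $\tau(\rho)=\tau$. For each $n\in\omega$, set $\mathcal{U}_n=\{B_{\rho}(x,2^{-n}): x\in X\}$ and put $\mathcal{U}=\bigcup_{n\in\omega}\mathcal{U}_n$. Clearly $\mathcal{U}$ is a base for $\tau$. The key observation is that, in the non-archimedean case, $y\in B_{\rho}(x,2^{-n})$ forces $B_{\rho}(y,2^{-n})\subseteq B_{\rho}(x,2^{-n})$: if $\rho(y,z)<2^{-n}$ and $\rho(x,y)<2^{-n}$, then $\rho(x,z)\leq\max\{\rho(x,y),\rho(y,z)\}<2^{-n}$. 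Hence for any nonempty $\mathcal{V}\subseteq\mathcal{U}_n$ and any $y\in\bigcap\mathcal{V}$, the ball $B_{\rho}(y,2^{-n})$ lies inside $\bigcap\mathcal{V}$, so $\bigcap\mathcal{V}$ is open. Thus each $\mathcal{U}_n$ is interior-preserving.

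For the harder direction, assume $X$ is $T_1$ and $\mathcal{B}=\bigcup_{n\in\omega}\mathcal{B}_n$ is a base with each $\mathcal{B}_n$ interior-preserving. For $n\in\omega$ and $x\in X$, define
\[
W_n(x)=\bigcap\{B\in\mathcal{B}_n: x\in B\},
\]
with the convention $W_n(x)=X$ when no such $B$ exists; by interior-preservation each $W_n(x)$ is open and contains $x$. Since $X$ is $T_1$ and $\mathcal{B}$ is a base, for every $x\neq y$ in $X$ the set $\{n\in\omega: y\notin W_n(x)\}$ is nonempty, so it has a least element, call it $n(x,y)$. Set $\rho(x,x)=0$ and $\rho(x,y)=2^{-n(x,y)}$ for $x\neq y$. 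No choice is invoked here: minimizing over subsets of $\omega$ and intersecting an already-given family are both $\mathbf{ZF}$-safe operations.

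Next I would verify that $\rho$ is a non-archimedean quasi-metric inducing $\tau$. Condition (i) is immediate. For the strong triangle inequality, suppose $x,y,z$ are pairwise distinct and set $a=n(x,y)$, $b=n(y,z)$, $m=\min\{a,b\}$; for every $k<m$ we have $y\in W_k(x)$ and $z\in W_k(y)$, so for every $B\in\mathcal{B}_k$ with $x\in B$ we have $y\in B$, whence $z\in W_k(y)\subseteq B$, so $z\in W_k(x)$. Therefore $n(x,z)\geq m$ and $\rho(x,z)\leq 2^{-m}=\max\{\rho(x,y),\rho(y,z)\}$; the degenerate cases (when some two of $x,y,z$ coincide) are trivial. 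To identify the topology, note that for every $n\in\omega$,
\[
B_{\rho}(x,2^{-n})=\bigcap_{k\leq n}W_k(x),
\]
which is open in $\mathbf{X}$; conversely, if $x\in U\in\tau$, choose $B\in\mathcal{B}_n$ with $x\in B\subseteq U$; then $B_{\rho}(x,2^{-n})\subseteq W_n(x)\subseteq B\subseteq U$. Thus $\tau(\rho)=\tau$.

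I do not expect any genuine obstacle: the only thing to watch is that the definition of $\rho$ must avoid hidden choices. Using the canonically given indexed sequence $\{\mathcal{B}_n: n\in\omega\}$ supplied by the hypothesis of $\sigma$-interior-preservation, and minimizing over subsets of $\omega$, the construction is entirely effective, so the argument is valid in $\mathbf{ZF}$.
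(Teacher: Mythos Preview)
Your proof is correct. The paper does not actually supply a proof of this theorem; it simply records it as a known result, citing \cite[p.~151]{FL} and \cite[Theorem~10.3]{Gr}, and remarks that the proof goes through in $\mathbf{ZF}$. Your argument is the standard one from those references, with the explicit observation that the definitions of $W_n(x)$ and $n(x,y)$ (intersection of a given family, least element of a nonempty subset of $\omega$) require no choice, which is exactly the point the paper is relying on.
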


In the sequel, we assume that  $\mathcal{A}=\{A_1,A_2, A_3, A_4\}$ is a given 4-cover of $\mathbb{R}$.

 Let us state our first main theorem on $H_4(\mathcal{A})$.

\begin{theorem}
\label{s2t5}
$[\mathbf{ZF}]$ 
Suppose that the 4-cover $\mathcal{A}$ satisfies the following conditions:
\begin{enumerate}
\item[(i)] there exists a family $\{F_n: n\in\omega\}$ of subsets of $A_3$ such that $A_3=\bigcup_{n\in\omega}F_n$ and, for every $n\in\omega$, $\cl_{\mathbb{S}^{\leftarrow}}(F_n)\subseteq A_2\cup A_3$;
\item[(ii)] there exists a family $\{H_n: n\in\omega\}$ of subsets of $A_4$ such that $A_4=\bigcup_{n\in\omega}H_n$ and, for every $n\in\omega$, $\cl_{\mathbb{S}}(H_n)\subseteq A_2\cup A_4$.
\end{enumerate}
 Then the space $H_4(\mathcal{A})$ is non-archimedeanly quasi-metrizable. In particular, if $A_3$ is of type $F_{\sigma}$ in $\mathbb{S}^{\leftarrow}$, and $A_4$ is of type $F_{\sigma}$ in $\mathbb{S}$, then $H(\mathcal{A})$ is non-archimedeanly quasi-metrizable.
\end{theorem}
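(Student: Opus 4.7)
The plan is to invoke Theorem~\ref{s2t4}: since $H_4(\mathcal{A})$ is $T_3$ and hence $T_1$, it suffices to exhibit a $\sigma$-interior-preserving base for $H_4(\mathcal{A})$. I would assemble such a base $\mathcal{B}$ from four pieces. First, $\mathcal{V}^1=\{(p,q):p,q\in\mathbb{Q},\ p<q\}$ is countable in $\mathbf{ZF}$ and hence a countable union of singleton (trivially interior-preserving) families. Second, $\mathcal{V}^2=\{\{x\}:x\in A_2\}$ is a single interior-preserving family, since any two distinct singletons intersect in $\emptyset$. Third, for $n\in\omega$ and $r\in\mathbb{Q}$, set
\[
\mathcal{V}^3_{n,r}=\{[x,r):x\in F_n,\ x<r\}\quad\text{and}\quad\mathcal{V}^4_{n,r}=\{(r,x]:x\in H_n,\ r<x\}.
\]
A routine check shows every listed set is open in $H_4(\mathcal{A})$ and that the union $\mathcal{B}$ of these families furnishes a local base at every point: rational intervals at $A_1$-points by density of $\mathbb{Q}$; the singleton at each $A_2$-point; for $x\in A_3$, pick $n$ with $x\in F_n$ and $r\in\mathbb{Q}\cap(x,x+\epsilon)$, so that $[x,r)\in\mathcal{V}^3_{n,r}$ and $[x,r)\subseteq[x,x+\epsilon)$ (symmetrically at $A_4$-points).

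The crux is interior-preservation of $\mathcal{V}^3_{n,r}$; the case of $\mathcal{V}^4_{n,r}$ is symmetric, invoking hypothesis (ii). Given $T\subseteq F_n$ with $x<r$ for every $x\in T$, a direct computation yields
\[
\bigcap_{x\in T}[x,r)=[\sup T,\,r)
\]
whenever $\sup T<r$. Set $a=\sup T$. If $a\in T$ then $a\in F_n\subseteq A_3$. Otherwise, $T\subseteq F_n$ accumulates at $a$ strictly from below, so every basic $\mathbb{S}^{\leftarrow}$-neighborhood $(a-\epsilon,a]$ of $a$ meets $F_n$; hence $a\in\cl_{\mathbb{S}^{\leftarrow}}(F_n)\subseteq A_2\cup A_3$ by hypothesis~(i). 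In either case $a\in A_2\cup A_3$, which is exactly what is needed to make $[a,r)$ open in $H_4(\mathcal{A})$: if $a\in A_3$, the basic neighborhood $[a,a+\delta)\subseteq[a,r)$ for small $\delta$ witnesses openness at $a$; if $a\in A_2$, decompose $[a,r)=\{a\}\cup(a,r)$, the latter being Euclidean-open and therefore open in $H_4(\mathcal{A})$.

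The main obstacle I foresee is choosing the correct parametrization for the $A_3$- and $A_4$-pieces. A naive fixed-scale variant $\{[x,x+1/m):x\in F_n\}$ is not interior-preserving: when $\sup T$ is attained but $\inf T$ is not, the intersection becomes $[\sup T,\inf T+1/m]$, trapping a right endpoint whose $\mathcal{A}$-type is not constrained by the hypotheses. Parametrizing instead by a rational right endpoint $r$ circumvents this, because $r$ is never included in $\bigcap_{x\in T}[x,r)=[\sup T,r)$, so only the left endpoint needs to be controlled, which is precisely the content of hypothesis~(i). Finally, the ``in particular'' clause is immediate: if $A_3=\bigcup_n F_n$ with each $F_n$ closed in $\mathbb{S}^{\leftarrow}$, then $\cl_{\mathbb{S}^{\leftarrow}}(F_n)=F_n\subseteq A_3\subseteq A_2\cup A_3$, verifying~(i); analogously for~(ii).
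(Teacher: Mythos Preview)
Your proof is correct and follows essentially the same strategy as the paper: both invoke Theorem~\ref{s2t4} and build a $\sigma$-interior-preserving base from the rational open intervals, the singletons over $A_2$, and the families $\{[x,r):x\in F_n,\ x<r\}$ and $\{(r,x]:x\in H_n,\ r<x\}$ indexed by $n\in\omega$ and $r\in\mathbb{Q}$, using hypotheses (i) and (ii) to control the one endpoint of the intersection whose type is not forced by the rational parameter. The only cosmetic difference is that the paper verifies interior-preservation pointwise (for $x_0\in\bigcap\mathcal{W}$ it intersects \emph{all} members of the family containing $x_0$ and identifies the result as $(q,y_0]$ with $y_0=\inf\{x\in H_n:x_0\le x\}$), whereas you compute $\bigcap_{x\in T}[x,r)=[\sup T,r)$ globally; and you should note explicitly that the residual case $\sup T=r$ gives an empty intersection, which is trivially open.
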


\begin{proof}
In view of Theorem \ref{s2t4}, it suffices to show a $\sigma$-interior-preserving base for $H_4(\mathcal{A})$.

We fix families $\{F_n: n\in\omega\}$  and $\{H_n: n\in\omega\}$ which have the properties described in (i) and (ii), respectively.

Let $\mathcal{U}^{d}=\{\{x\}: x\in A_2\}$. Of course, $\mathcal{U}^{d}$ is interior-preserving in $H_4(\mathcal{A})$.

 Given $n\in\omega$ and $q\in\mathbb{Q}$, we put $\mathcal{U}_q(n)=\{[x, q): x\in F_n\wedge x<q\}$ and $\mathcal{V}_q(n)=\{(q, x]: x\in H_n\wedge q<x\}$.
 
  Let us show that the family $\mathcal{V}_q(n)$ is interior-preserving in $H_4(\mathcal{A})$. To this end, we fix a non-empty subfamily $\mathcal{W}$ of $\mathcal{V}_q(n)$. We may assume that $\bigcap\mathcal{W}\neq\emptyset$. We fix a point $x_0\in\bigcap\mathcal{W}$ and put $\mathcal{V}_q(n, x_0)=\{V\in\mathcal{V}_q(n): x_0\in V\}$. Since $x_0\in\mathcal{W}$, we have $\mathcal{V}_q(n, x_0)\neq\emptyset$. Let $V(x_0)=\bigcap\mathcal{V}_q(n, x_0)$. Of course, $x_0\in V(x_0)\subseteq\bigcap\mathcal{W}$. To prove that $\mathcal{V}_q(n)$ is interior-preserving in $H_4(\mathcal{A})$, it suffices to check that $V(x_0)$ is open in $H_4(\mathcal{A})$. 
  
Let $Y(x_0)=\{x\in H_n: x_0\leq x\}$. Since $\mathcal{V}_q(n, x_0)\neq\emptyset$, we infer that $Y(x_0)\neq\emptyset$. Let $y_0=\inf Y(x_0)$. Then $q<x_0\leq y_0$ and $(q, y_0]\subseteq V(x_0)\subseteq\bigcap\{(q, y]: y\in Y(x_0)\}$. This implies that $V(x_0)=(q, y_0]$. Since $\cl_{\mathbb{S}}(H_n)\subseteq A_4\cup A_2$, it follows that $(q, y_0]\in\tau[H_4(\mathcal{A})]$. This proves that all the families $\mathcal{V}_q(n)$ are interior-preserving in $H_4(\mathcal{A})$. Using similar arguments, one can show that, for every $q\in\mathbb{Q}$ and every $n\in\omega$, the family $\mathcal{U}_q(n)$ is also interior-preserving in $H_4(\mathcal{A})$. 
  
For every pair $p,q$ of rational numbers with $p<q$, we put $\mathcal{W}_{p,q}=\{(p, q)\}$. The family $\mathcal{W}_{p,q}$ is interior-preserving in $H_4(\mathcal{A})$. Furthermore, the union
\[\mathcal{U}^{d}\cup\bigcup\{\mathcal{W}_{p,q}: p,q\in\mathbb{Q}, p<q\}\cup\bigcup\{\mathcal{U}_q(n)\cup\mathcal{V}_q(n): q\in\mathbb{Q}\wedge n\in\omega\}\]

\noindent is a $\sigma$-interior-preserving base for $H_4(\mathcal{A})$.
\end{proof}

\begin{corollary}
\label{s2c6}
$[\mathbf{ZF}]$
Suppose that the 4-cover $\mathcal{A}$ is such that $A_3\cup A_4=\emptyset$. Then the hybrid space $H_4(\mathcal{A})$ is non-archimedeanly quasi-metrizable.
\end{corollary}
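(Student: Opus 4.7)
The plan is to simply invoke Theorem \ref{s2t5} with trivial witnessing families. Since we are given $A_3 \cup A_4 = \emptyset$, both $A_3 = \emptyset$ and $A_4 = \emptyset$, so the hypotheses (i) and (ii) of Theorem \ref{s2t5} become vacuous and can be verified by taking the empty set everywhere.

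Concretely, for each $n \in \omega$, I would set $F_n = \emptyset$ and $H_n = \emptyset$. Then $A_3 = \emptyset = \bigcup_{n\in\omega} F_n$ and $\cl_{\mathbb{S}^{\leftarrow}}(F_n) = \emptyset \subseteq A_2 \cup A_3$ for every $n \in \omega$, so condition (i) of Theorem \ref{s2t5} is met; similarly, $A_4 = \bigcup_{n\in\omega} H_n$ and $\cl_{\mathbb{S}}(H_n) = \emptyset \subseteq A_2 \cup A_4$ for every $n \in \omega$, giving condition (ii). The choice of these witnessing families requires no form of the axiom of choice, since the constant family with value $\emptyset$ is explicitly definable in $\mathbf{ZF}$.

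Having verified both hypotheses, Theorem \ref{s2t5} applies directly and yields that $H_4(\mathcal{A})$ is non-archimedeanly quasi-metrizable. There is no obstacle to overcome here; the corollary is essentially a remark pointing out that the case $A_3 = A_4 = \emptyset$ (which corresponds to the space $D_1(A_2)$ of \cite{Rich1}) is a degenerate subcase of the main theorem in which the $F_{\sigma}$ hypotheses are automatic.
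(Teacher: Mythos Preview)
Your proposal is correct and matches the paper's intent: the corollary is stated without proof, as an immediate consequence of Theorem~\ref{s2t5}, and your verification that conditions (i) and (ii) hold trivially with $F_n=H_n=\emptyset$ is exactly the implicit reasoning.
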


Under the assumptions of Corollary \ref{s2c6}, a quasi-metric (in general, not non-archimedean) for $H_4(\mathcal{A})$ is shown in \cite[Theorem 5.2]{Rich1}.

\begin{corollary}
\label{s2c7}
$[\mathbf{ZF}]$
Suppose that the 4-cover $\mathcal{A}$ is such that either $A_1\cup A_3=\emptyset$ or $A_1\cup A_4=\emptyset$.  Then the hybrid space $H_4(\mathcal{A})$ is non-archimedeanly quasi-metrizable.
\end{corollary}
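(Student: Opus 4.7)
The plan is to deduce this corollary directly from Theorem \ref{s2t5} by checking that hypotheses (i) and (ii) of that theorem hold trivially in both of the asserted cases. The key observation is that, under either hypothesis, one of $A_3$ or $A_4$ is empty while the remaining two nonempty pieces cover all of $\mathbb{R}$, so the required $F_\sigma$-decompositions and closure inclusions reduce to tautologies.

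More precisely, suppose first that $A_1\cup A_3=\emptyset$. Then $A_3=\emptyset$, so setting $F_n=\emptyset$ for every $n\in\omega$ gives a family $\{F_n:n\in\omega\}$ with $A_3=\bigcup_{n\in\omega}F_n$ and $\cl_{\mathbb{S}^{\leftarrow}}(F_n)=\emptyset\subseteq A_2\cup A_3$, verifying condition (i). Since also $A_1=\emptyset$, we have $A_2\cup A_4=\mathbb{R}$; thus setting $H_n=A_4$ for every $n\in\omega$ yields $A_4=\bigcup_{n\in\omega}H_n$ and $\cl_{\mathbb{S}}(H_n)\subseteq\mathbb{R}=A_2\cup A_4$, which verifies condition (ii). Theorem \ref{s2t5} then applies and delivers non-archimedean quasi-metrizability of $H_4(\mathcal{A})$.

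The case $A_1\cup A_4=\emptyset$ is entirely symmetric: one takes $H_n=\emptyset$ for all $n$ to satisfy (ii) vacuously, and $F_n=A_3$ for all $n$ to satisfy (i), using that now $A_2\cup A_3=\mathbb{R}$.

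I do not anticipate any real obstacle. The only point worth flagging in the ZF-context of this paper is that the countable families $\{F_n\}$ and $\{H_n\}$ chosen here are constant families (each term equal to $\emptyset$, $A_3$, or $A_4$), so they are defined by an explicit formula and their existence requires no form of choice. Once the two cases are set up, the rest is immediate from Theorem \ref{s2t5}.
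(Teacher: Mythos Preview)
Your proposal is correct and follows exactly the intended approach: the paper states this as an immediate corollary of Theorem~\ref{s2t5} without giving an explicit proof, and your verification of conditions (i) and (ii) via the constant families $F_n=\emptyset$, $H_n=A_4$ (respectively $H_n=\emptyset$, $F_n=A_3$) is precisely the trivial check that justifies it. Your remark about the families being explicitly definable without choice is appropriate for the $\mathbf{ZF}$ setting of the paper.
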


Under the assumptions of our Corollary \ref{s2c7}, a quasi-metric for $H_4(\mathcal{A})=D_2(A_2)$ is shown in \cite[Theorem 5.2]{Rich1}.

One may try to deduce our Theorem \ref{s2t5} from \cite[Theorem 10]{Kof} or from \cite[Theorem 3.1]{B}; however, the results of \cite{B} and \cite{Kof} were proved in $\mathbf{ZFC}$ but not in $\mathbf{ZF}$,  no detailed $\mathbf{ZFC}$-proof of \cite[Theorem 10]{Kof} is included in \cite{Kof}, and the original proof of \cite[Theorem (3.1)]{B} in a more general setting is too complicated for $\mathbf{ZF}$-proofs of our theorems. Furthermore, we shall show later that \cite[Theorem 10]{Kof} is unprovable in $\mathbf{ZF}$. This is partly why we have given a simple, detailed proof of Theorem \ref{s2t5} and we give below a very clarified proof of Theorem \ref{s2t8} stating that the assumptions of Theorem \ref{s2t5} are also necessary for $H_4(\mathcal{A})$ to be quasi-metrizable in $\mathbf{ZF}$.

\begin{theorem}
\label{s2t8}
$[\mathbf{ZF}]$ If $H_4(\mathcal{A})$ is quasi-metrizable, then $\mathcal{A}$ satisfies conditions (i) and (ii) of Theorem \ref{s2t5}.
\end{theorem}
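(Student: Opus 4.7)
The plan is to fix a quasi-metric $\rho$ with $\tau(\rho)=\tau[H_4(\mathcal{A})]$ and extract the required families directly from the $\rho$-balls: for each $n\in\mathbb{N}$ set
\[
F_n=\{y\in A_3:B_{\rho}(y,1/n)\subseteq[y,+\infty)\}\quad\text{and}\quad H_n=\{y\in A_4:B_{\rho}(y,1/n)\subseteq(-\infty,y]\}.
\]
The equalities $A_3=\bigcup_n F_n$ and $A_4=\bigcup_n H_n$ are immediate from first-countability of $\tau(\rho)$: for $y\in A_3$ the set $[y,+\infty)$ contains the basic $\tau[H_4(\mathcal{A})]$-neighborhood $[y,y+1)$, and hence contains some $B_\rho(y,1/n)$; the case of $A_4$ is symmetric.

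The heart of the argument is to show $\cl_{\mathbb{S}^{\leftarrow}}(F_n)\subseteq A_2\cup A_3$. The main obstacle is that in a quasi-metric space, $\tau(\rho)$-convergence $y_k\to x$ only yields $\rho(x,y_k)\to 0$ and offers no a priori control on the backward distance $\rho(y_k,x)$; I would overcome this by using an anchor point inside $F_n$ to force $\rho(y,x)$ to be bounded away from $0$ via the defining condition of $F_n$. Fix $n$ and assume toward contradiction that some $x\in A_1\cup A_4$ belongs to $\cl_{\mathbb{S}^{\leftarrow}}(F_n)$. For any such $x$, every $\tau(\rho)$-neighborhood contains a left-tail of the form $(x-\epsilon,x]$, so one may choose $\epsilon_1>0$ with $(x-\epsilon_1,x]\subseteq B_\rho(x,1/(4n))$. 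Since $F_n\subseteq A_3$ and $x\notin A_3$, the hypothesis $x\in\cl_{\mathbb{S}^{\leftarrow}}(F_n)$ supplies an anchor $y_0\in F_n\cap(x-\epsilon_1,x)$ and then a further point $y\in F_n$ with $y_0<y<x$; by the choice of $\epsilon_1$, both $\rho(x,y_0)$ and $\rho(x,y)$ are strictly less than $1/(4n)$.

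The $F_n$-condition for $y$ together with $y_0<y$ yields $\rho(y,y_0)\ge 1/n$; combined with $\rho(x,y_0)<1/(4n)$ via the triangle inequality $\rho(y,y_0)\le\rho(y,x)+\rho(x,y_0)$, this gives $\rho(y,x)>3/(4n)$. A second application of the triangle inequality shows that for every $z\in B_\rho(x,1/(4n))$,
\[
\rho(y,z)\ge\rho(y,x)-\rho(z,x)>\frac{3}{4n}-\frac{1}{4n}=\frac{1}{2n},
\]
so $B_\rho(x,1/(4n))\cap B_\rho(y,1/(2n))=\emptyset$; but $y$ belongs to both balls, a contradiction. The proof of $\cl_{\mathbb{S}}(H_n)\subseteq A_2\cup A_4$ is completely analogous, with all inequalities and orderings reversed.
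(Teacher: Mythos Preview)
Your argument has a genuine gap in the final step. From $z\in B_\rho(x,1/(4n))$ you only know $\rho(x,z)<1/(4n)$, not $\rho(z,x)<1/(4n)$; yet the inequality $\rho(y,z)\ge\rho(y,x)-\rho(z,x)$ (which is the correct form of the triangle inequality here) requires a bound on $\rho(z,x)$. So the disjointness $B_\rho(x,1/(4n))\cap B_\rho(y,1/(2n))=\emptyset$ is unproved, and the contradiction collapses. Notice that everything you have established up to that point---$\rho(x,y)<1/(4n)$, $\rho(x,y_0)<1/(4n)$, $\rho(y,y_0)\ge 1/n$, and hence $\rho(y,x)>3/(4n)$---is perfectly consistent in a quasi-metric space; large $\rho(y,x)$ together with small $\rho(x,y)$ is exactly the asymmetry you yourself flagged as the main obstacle, and your anchor-point trick does not actually remove it.

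The paper's proof addresses precisely this difficulty by refining the decomposition with a countable dense set. Fixing an enumeration of $\mathbb{Q}$ and finite initial segments $Q_k$, one sets (in your notation)
\[
F_{k,n}=\{y\in A_3: B_\rho(y,2^{-n})\subseteq[y,+\infty)\ \text{and}\ [y,y+\delta_y)\cap(Q_k\setminus\{y\})\neq\emptyset\},
\]
where $[y,y+\delta_y)$ is a definite interval contained in $B_\rho(y,2^{-(n+1)})$. The extra $Q_k$-clause is the crucial device: if $x\in A_1\cup A_4$ lies in $\cl_{\mathbb{S}^{\leftarrow}}(F_{k,n})$, then for any $y\in F_{k,n}$ just left of $x$ with no point of $Q_k$ between $y$ and $x$, the interval $[y,y+\delta_y)$ must reach past $x$ to hit $Q_k$, forcing $x\in B_\rho(y,2^{-(n+1)})$, i.e.\ $\rho(y,x)<2^{-(n+1)}$. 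Now the triangle inequality runs in the right direction: for any $z\in B_\rho(x,2^{-(n+1)})$ one gets $\rho(y,z)\le\rho(y,x)+\rho(x,z)<2^{-n}$, so $z\in B_\rho(y,2^{-n})\subseteq[y,+\infty)$; taking $z$ to be a second point of $F_{k,n}$ strictly left of $y$ gives the contradiction. In short, the paper engineers the \emph{forward} bound $\rho(y,x)$ small, whereas your argument only produces the backward bound $\rho(y,x)$ large, which is not enough.
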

\begin{proof}
Let $\rho$ be a quasi-metric for $H_4(\mathcal{A})$. By using and clarifying some ideas from \cite[Section 3]{B}, we shall prove that condition (i) of Theorem \ref{s2t5} is satisfied. To this aim, we fix a bijection $q:\omega\to\mathbb{Q}$. For every $k\in\omega$, let $Q_k=\{q(i): i\in k+1\}$. For every $n\in\omega$ and every $x\in\mathbb{R}$, let $E_n(x)=B_{\rho}(x, \frac{1}{2^n})$. For $x\in A_3$ and  $n\in\omega$, let $m(x,n)=\min\{i\in\omega: [x, x+\frac{1}{2^i})\subseteq E_n(x)\}$, $j(x,n)=\max\{n, m(x,n)\}$ and $D_n(x)=[x,x+\frac{1}{2^{j(x,n)}})$. For every pair $k,n\in\omega$, we define a subset $F_{k,n}$ of $A_3$ as follows:
\[F_{k,n}=\{x\in A_3: E_n(x)\subseteq [x, +\infty)\wedge D_{n+1}(x)\cap (Q_k\setminus\{x\})\neq\emptyset\}.\]

It is obvious that $A_3=\bigcup\{F_{k,n}: k,n\in\omega\}$. Let us fix $k,n\in\omega$ and show that $\cl_{\mathbb{S}^{\leftarrow}}(F_{k,n})\subseteq A_2\cup A_3$. 

Let $p\in\cl_{\mathbb{S}^{\leftarrow}}(F_{k,n})$. Suppose that $p\in A_1\cup A_4$. Let $q_0=\max{Q_k}$. Suppose that $q_0< p$. Then there exists $x_0\in F_{k,n}\cap(q_0, p]$. But this is impossible because $D_{n+1}(x_0)\cap Q_k\neq\emptyset$. The contradiction obtained shows that $p\leq q_0$. Let $q_p=\min\{q(i): i\in k+1\wedge p\leq q(i)\}$. We can choose $\epsilon>0$ such that $(p-\epsilon, p)\cap Q_k=\emptyset$. We notice that if $x\in F_{k,n}\cap (p-\epsilon, p)$, then $q_p\in D_{n+1}(x)$, so $p\in D_{n+1}(x)$. Since  $p\in\cl_{\mathbb{S}^{\leftarrow}}(F_{k,n})$ and $p\in A_1\cup A_4$, we deduce that there exist $x_1,x_2\in F_{k,n}\cap E_{n+1}(p)\cap (p-\epsilon, p)$ such that $x_1<x_2$. Since $p\in D_{n+1}(x_2)\subseteq E_{n+1}(x_2)$, we infer that $E_{n+1}(p)\subseteq E_n(x_2)$. Hence $x_1\in E_n(x_2)\subseteq [x_2, +\infty)$. This is impossible. The contradiction obtained proves that $p\in A_0\cup A_3$. Hence (i) of Theorem \ref{s2t5} is satisfied. Arguing similarly, one can show that also condition (ii) of Theorem \ref{s2t5} is satisfied.
\end{proof}

\begin{corollary}
\label{s2c09}
$[\mathbf{ZF}]$
For every 4-cover $\mathcal{A}$ of $\mathbb{R}$, the following conditions are all equivalent:
\begin{enumerate}
\item[(i)] $H_4(\mathcal{A})$ is non-archimedeanly quasi-metrizable;
\item[(ii)] $H_4(\mathcal{A})$ is quasi-metrizable;
\item[(iii)] $\mathcal{A}$ satisfies conditions (i)--(ii) of Theorem \ref{s2t5}.
\end{enumerate}
\end{corollary}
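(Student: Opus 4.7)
The plan is to close the cycle of implications (i)$\Rightarrow$(ii)$\Rightarrow$(iii)$\Rightarrow$(i), each arrow being either immediate or a restatement of a result already established in the paper.

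First, I would dispose of (i)$\Rightarrow$(ii) by the one-line observation that for non-negative reals $a,b$ one has $\max\{a,b\}\leq a+b$, so any function $\rho\colon X\times X\to[0,+\infty)$ that satisfies the non-archimedean triangle inequality automatically satisfies the ordinary triangle inequality. Hence every non-archimedean quasi-metric is a quasi-metric in the sense of Definition \ref{s2d1}, and every non-archimedeanly quasi-metrizable space is quasi-metrizable. This argument goes through in $\mathbf{ZF}$ with no appeal to any choice principle.

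Next, I would observe that (ii)$\Rightarrow$(iii) is exactly the content of Theorem \ref{s2t8}: assuming $H_4(\mathcal{A})$ admits a quasi-metric inducing its topology, that theorem constructs the required $F_{\sigma}$-type decompositions of $A_3$ in $\mathbb{S}^{\leftarrow}$ and of $A_4$ in $\mathbb{S}$. For the remaining implication (iii)$\Rightarrow$(i), I would invoke Theorem \ref{s2t5}, whose conclusion is precisely that $H_4(\mathcal{A})$ is non-archimedeanly quasi-metrizable under conditions (i)--(ii) of that theorem.

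Since each of the three implications has already been carried out, there is no genuine obstacle; the corollary is essentially a packaging statement. The only subtlety worth mentioning explicitly is that the argument is entirely choice-free, which is important for the paper's $\mathbf{ZF}$ framework, since the bijection $q\colon\omega\to\mathbb{Q}$ used in the proof of Theorem \ref{s2t8} exists outright and the base constructed in the proof of Theorem \ref{s2t5} is built from explicitly indexed families. Thus the equivalence (i)$\Leftrightarrow$(ii)$\Leftrightarrow$(iii) holds in $\mathbf{ZF}$.
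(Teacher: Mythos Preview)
Your argument is correct and matches the paper's intended reasoning: the corollary is stated without proof precisely because it is the immediate combination of Theorem~\ref{s2t5} for (iii)$\Rightarrow$(i), Theorem~\ref{s2t8} for (ii)$\Rightarrow$(iii), and the trivial (i)$\Rightarrow$(ii). There is nothing to add.
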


\begin{corollary}
\label{s2c9}
$[\mathbf{ZF}]$ Suppose that the 4-cover $\mathcal{A}$ is such that $A_2=\emptyset$. Then the space $H_4(\mathcal{A})$ is quasi-metrizable if and only if $A_3$ is of type $F_{\sigma}$ in $\mathbb{S}^{\leftarrow}$, and $A_4$ is of type $F_{\sigma}$ in $\mathbb{S}$.
\end{corollary}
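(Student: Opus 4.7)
The plan is to deduce Corollary \ref{s2c9} directly from Corollary \ref{s2c09}, exploiting the extra simplification afforded by the assumption $A_2=\emptyset$.

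For the ``if'' direction, no appeal to $A_2=\emptyset$ is needed: if $A_3$ is $F_\sigma$ in $\mathbb{S}^{\leftarrow}$ and $A_4$ is $F_\sigma$ in $\mathbb{S}$, then writing $A_3=\bigcup_{n\in\omega}F_n$ and $A_4=\bigcup_{n\in\omega}H_n$ with each $F_n$ closed in $\mathbb{S}^{\leftarrow}$ and each $H_n$ closed in $\mathbb{S}$ gives $\cl_{\mathbb{S}^{\leftarrow}}(F_n)=F_n\subseteq A_3\subseteq A_2\cup A_3$ and $\cl_{\mathbb{S}}(H_n)=H_n\subseteq A_4\subseteq A_2\cup A_4$, so Theorem \ref{s2t5} (``in particular'' clause) applies and yields that $H_4(\mathcal{A})$ is non-archimedeanly quasi-metrizable, hence quasi-metrizable.

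For the ``only if'' direction, I would invoke Corollary \ref{s2c09}: quasi-metrizability of $H_4(\mathcal{A})$ delivers families $\{F_n:n\in\omega\}$ and $\{H_n:n\in\omega\}$ with $A_3=\bigcup_{n\in\omega}F_n$, $\cl_{\mathbb{S}^{\leftarrow}}(F_n)\subseteq A_2\cup A_3$, $A_4=\bigcup_{n\in\omega}H_n$, and $\cl_{\mathbb{S}}(H_n)\subseteq A_2\cup A_4$. Since $A_2=\emptyset$, these containments become $\cl_{\mathbb{S}^{\leftarrow}}(F_n)\subseteq A_3$ and $\cl_{\mathbb{S}}(H_n)\subseteq A_4$. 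Setting $\widetilde{F}_n=\cl_{\mathbb{S}^{\leftarrow}}(F_n)$ and $\widetilde{H}_n=\cl_{\mathbb{S}}(H_n)$, one obtains $A_3=\bigcup_{n\in\omega}\widetilde{F}_n$ as a countable union of $\mathbb{S}^{\leftarrow}$-closed sets and $A_4=\bigcup_{n\in\omega}\widetilde{H}_n$ as a countable union of $\mathbb{S}$-closed sets, which is exactly the desired $F_\sigma$ description.

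There is no real obstacle here; the whole content of the corollary is the observation that the ``containment up to $A_2$'' in the necessary-and-sufficient conditions of Corollary \ref{s2c09} collapses, when $A_2=\emptyset$, to a genuine $F_\sigma$ representation (after replacing each member of the covering family by its closure, which is a legitimate move in $\mathbf{ZF}$ since no choice is required to form the closure of a single set). The only point that deserves a line of comment is that the countable family $\{\widetilde{F}_n:n\in\omega\}$ is obtained from $\{F_n:n\in\omega\}$ by a definable rule, so its existence in $\mathbf{ZF}$ is immediate from the existence of $\{F_n:n\in\omega\}$.
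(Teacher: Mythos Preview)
Your proposal is correct and matches the paper's intended derivation: the corollary is stated without proof precisely because it follows immediately from Corollary~\ref{s2c09} (equivalently, Theorems~\ref{s2t5} and~\ref{s2t8}) by the observation you make, namely that $A_2=\emptyset$ collapses the inclusions $\cl_{\mathbb{S}^{\leftarrow}}(F_n)\subseteq A_2\cup A_3$ and $\cl_{\mathbb{S}}(H_n)\subseteq A_2\cup A_4$ to genuine $F_\sigma$ representations after passing to closures. Your remark that the passage to $\{\widetilde{F}_n\}$ is definable and hence unproblematic in $\mathbf{ZF}$ is appropriate given the paper's emphasis on choice-free arguments.
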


\begin{problem}
\label{s2p10}
Find, if possible, a subset $F$ of $\mathbb{R}$ such that $F$ is of type $F_{\sigma}$ in $\mathbb{S}$ but $F$ is not of type $F_{\sigma}$ in $\mathbb{R}$.
\end{problem}

To show our next corollary and that Theorem 10 of \cite{Kof} is unprovable in $\mathbf{ZF}$, we need the following lemma. We include its $\mathbf{ZF}$-proof for completeness.

\begin{lemma}
\label{s2l12}
$[\mathbf{ZF}]$
If a subset $F$ of $\mathbb{R}$ is closed either in $\mathbb{S}$ or in $\mathbb{S}^{\leftarrow}$, then $F$ is of type $G_{\delta}$ in $\mathbb{R}$. 
\end{lemma}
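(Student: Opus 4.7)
The plan is to reduce everything to showing that $L:=\overline{F}^{\,\mathbb{R}}\setminus F$ is at most countable, from which the conclusion follows easily. Note first that the two cases are exchanged by the self-homeomorphism $x\mapsto -x$ of $\mathbb{R}$, which swaps $\mathbb{S}$ and $\mathbb{S}^{\leftarrow}$, so it suffices to treat the case where $F$ is closed in $\mathbb{S}$. The key elementary observation is that $F$ is closed in $\mathbb{S}$ iff every $y\notin F$ admits $\epsilon>0$ with $F\cap[y,y+\epsilon)=\emptyset$; equivalently, no $y\notin F$ is a right-accumulation point of $F$. Consequently every $y\in L\subseteq \overline{F}^{\,\mathbb{R}}$ is purely a left-accumulation point of $F$.

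The heart of the proof is the countability of $L$ in $\mathbf{ZF}$. For $y\in L$, define $b(y)=\inf(F\cap(y,+\infty))$, with $\inf\emptyset=+\infty$; by the observation above, $b(y)>y$, and the nonempty open interval $(y,b(y))$ is disjoint from $F$. The crucial geometric fact is that for $y_{1}<y_{2}$ in $L$ one must have $b(y_{1})\le y_{2}$: otherwise $(y_{1},y_{2})\subseteq(y_{1},b(y_{1}))$ would be $F$-free, contradicting that $y_{2}$ is a left-limit of $F$. Hence the open intervals $(y,b(y))$, $y\in L$, are pairwise disjoint. Fix now once and for all a bijection $q:\omega\to\mathbb{Q}$ (available in $\mathbf{ZF}$) and, for each $y\in L$, set $n(y)=\min\{n\in\omega : q(n)\in(y,b(y))\}$; this is well defined because any nonempty open real interval contains a rational and $\omega$ is well-ordered, and crucially this step invokes no form of choice. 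Pairwise disjointness forces $n:L\to\omega$ to be injective, so $L$ is at most countable.

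To finish, write $F=\overline{F}^{\,\mathbb{R}}\cap(\mathbb{R}\setminus L)$. The closed set $\overline{F}^{\,\mathbb{R}}$ is $G_{\delta}$ in the metric space $\mathbb{R}$ (writing it as $\bigcap_{n\in\mathbb{N}}\{x:d(x,\overline{F}^{\,\mathbb{R}})<1/n\}$ needs no choice), and since $L$ is countable, $\mathbb{R}\setminus L=\bigcap_{y\in L}(\mathbb{R}\setminus\{y\})$ is likewise $G_{\delta}$; intersecting two $G_{\delta}$ sets, $F$ is $G_{\delta}$ in $\mathbb{R}$. The only potentially delicate step in $\mathbf{ZF}$ is selecting, for each $y\in L$, a rational witness inside $(y,b(y))$; this is precisely the pitfall avoided by the canonical ``least index in a fixed enumeration of $\mathbb{Q}$'' trick, which is why the proof goes through without any appeal to choice.
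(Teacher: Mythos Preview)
Your proof is correct and follows essentially the same strategy as the paper's: both show that $\overline{F}^{\,\mathbb{R}}\setminus F$ is countable by attaching to each of its points a nonempty open interval disjoint from $F$, verifying these intervals are pairwise disjoint, and then injecting into $\omega$ via a fixed enumeration of $\mathbb{Q}$. The only cosmetic differences are that the paper treats the $\mathbb{S}^{\leftarrow}$ case first and uses intervals of the form $(x-\tfrac{1}{j(x)},x]$ with $j(x)$ the least suitable integer, whereas you treat the $\mathbb{S}$ case first and use $(y,b(y))$ with $b(y)$ an infimum; neither change affects the argument.
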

\begin{proof}
Suppose that $F\subseteq\mathbb{R}$ is closed in $\mathbb{S}^{\leftarrow}$. We put $H=\cl_{\mathbb{R}}(F)$ and $C=H\setminus F$,  Since $F$ is closed in $\mathbb{S}^{\leftarrow}$, for every $x\in C$, the set $J(x)=\{j\in\mathbb{N}: (x-\frac{1}{j},x]\cap F=\emptyset\}$ is non-empty, so we can define $j(x)=\min J(x)$. Since the family $\{(x-\frac{1}{j(x)}, x]: x\in C\}$ is disjoint, it is countable. Therefore, the set $C$ is countable. This implies that $\mathbb{R}\setminus C$ is of type $G_{\delta}$ in $\mathbb{R}$. Since $H$ is also of type $G_{\delta}$ in $\mathbb{R}$, the set $F= H\cap(\mathbb{R}\setminus C)$ is of type $G_{\delta}$ in $\mathbb{R}$. To complete the proof, we consider the function $f:\mathbb{R}\to\mathbb{R}$ defined by: for every $x\in\mathbb{R}$, $f(x)=-x$. Since $f$ is a homeomorphism of $\mathbb{S}$ onto $\mathbb{S}^{\leftarrow}$, we infer that every closed set of $\mathbb{S}$ is also of type $G_{\delta}$ in $\mathbb{R}$.
\end{proof}

\begin{corollary}
\label{s2c013}
$[\mathbf{ZF}]$
Assume that the 4-cover $\mathcal{A}$ is such that $A_2$ is of type $F_{\sigma}$ in $\mathbb{R}$ but either $A_3$ or $A_4$ is not of type $G_{\delta \sigma}$ in $\mathbb{R}$. Then the hybrid space $H_4(\mathcal{A})$ is not quasi-metrizable.
\end{corollary}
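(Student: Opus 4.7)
The plan is to apply Corollary \ref{s2c09} in its contrapositive form. Suppose, aiming at a contradiction, that $H_4(\mathcal{A})$ is quasi-metrizable. Then there exists a family $\{F_n: n\in\omega\}$ of subsets of $A_3$ with $A_3=\bigcup_{n\in\omega}F_n$ and $\cl_{\mathbb{S}^{\leftarrow}}(F_n)\subseteq A_2\cup A_3$, and, symmetrically, a family $\{H_n:n\in\omega\}$ witnessing condition (ii) of Theorem \ref{s2t5} for $A_4$. The strategy is to show that under the extra hypothesis ``$A_2$ is $F_\sigma$ in $\mathbb{R}$'', both $A_3$ and $A_4$ are forced to be $G_{\delta\sigma}$ in $\mathbb{R}$, which contradicts the assumption of the corollary.

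I focus on $A_3$; the argument for $A_4$ will be entirely analogous, using the second family $\{H_n\}$ and closures in $\mathbb{S}$ instead of $\mathbb{S}^\leftarrow$. By Lemma \ref{s2l12}, each set $\cl_{\mathbb{S}^{\leftarrow}}(F_n)$, being closed in $\mathbb{S}^{\leftarrow}$, is of type $G_\delta$ in $\mathbb{R}$. Since the sets of the 4-cover are pairwise disjoint and $F_n\subseteq A_3$, the double inclusion
\[
A_3=\bigcup_{n\in\omega}F_n\subseteq\bigcup_{n\in\omega}\cl_{\mathbb{S}^{\leftarrow}}(F_n)\subseteq A_2\cup A_3
\]
yields
\[
A_3=\Bigl(\bigcup_{n\in\omega}\cl_{\mathbb{S}^{\leftarrow}}(F_n)\Bigr)\setminus A_2=\bigcup_{n\in\omega}\bigl(\cl_{\mathbb{S}^{\leftarrow}}(F_n)\cap(\mathbb{R}\setminus A_2)\bigr).
\]

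Because $A_2$ is assumed to be $F_\sigma$ in $\mathbb{R}$, the complement $\mathbb{R}\setminus A_2$ is $G_\delta$ in $\mathbb{R}$, and the intersection of two $G_\delta$ sets in $\mathbb{R}$ is again $G_\delta$. Hence each $\cl_{\mathbb{S}^{\leftarrow}}(F_n)\cap(\mathbb{R}\setminus A_2)$ is $G_\delta$ in $\mathbb{R}$, and $A_3$ is a countable union of $G_\delta$ sets, i.e.\ $G_{\delta\sigma}$ in $\mathbb{R}$. Repeating the argument with the family $\{H_n\}$, the space $\mathbb{S}$ in place of $\mathbb{S}^{\leftarrow}$, and $A_4$ in place of $A_3$, shows that $A_4$ is also $G_{\delta\sigma}$ in $\mathbb{R}$, producing the desired contradiction.

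Essentially nothing in this argument uses choice beyond what is already packaged inside Corollary \ref{s2c09} and Lemma \ref{s2l12}; the only manipulations are the identity $A_3=\bigl(\bigcup_n\cl_{\mathbb{S}^\leftarrow}(F_n)\bigr)\setminus A_2$ and the standard $G_\delta$-calculus on $\mathbb{R}$. The main point to get right is therefore the bookkeeping that $\cl_{\mathbb{S}^\leftarrow}(F_n)$ meets $A_1\cup A_4$ only inside $A_2$, so that subtracting $A_2$ recovers exactly $A_3$ rather than a larger set; this is ensured by the disjointness of the cover and the inclusion $\cl_{\mathbb{S}^\leftarrow}(F_n)\subseteq A_2\cup A_3$. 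I do not anticipate any serious obstacle beyond this verification.
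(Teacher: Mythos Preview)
Your proof is correct and follows essentially the same approach as the paper's: assume quasi-metrizability, invoke Theorem~\ref{s2t8} (equivalently Corollary~\ref{s2c09}) to obtain the families $\{F_n\}$ and $\{H_n\}$, apply Lemma~\ref{s2l12} to get each $\cl_{\mathbb{S}^{\leftarrow}}(F_n)$ as a $G_\delta$ in $\mathbb{R}$, then subtract the $F_\sigma$ set $A_2$ to exhibit $A_3$ as a countable union of $G_\delta$ sets. The paper writes the pieces as $E_n=\cl_{\mathbb{S}^{\leftarrow}}(F_n)\setminus A_2$ and notes $F_n\subseteq E_n\subseteq A_3$, which is exactly your $\cl_{\mathbb{S}^{\leftarrow}}(F_n)\cap(\mathbb{R}\setminus A_2)$.
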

\begin{proof}
Suppose the space $H_4(\mathcal{A})$ is quasi-metrizable. It follows from Theorem \ref{s2t8} that there exists a family $\{F_n: n\in\omega\}$ of subsets of $A_3$ such that $A_3=\bigcup_{n\in\omega}F_n$ and, for every $n\in\omega$, $\cl_{\mathbb{S}^{\leftarrow}}(F_n)\subseteq A_3\cup A_2$. We fix $n\in\omega$ and put $H_n=\cl_{\mathbb{S}^{\leftarrow}}(F_n)$. By Lemma \ref{s2l12}, the set $H_n$ is of type $G_{\delta}$ in $\mathbb{R}$. Since $A_2$ is of type $F_{\sigma}$ in $\mathbb{R}$, the set $E_n=H_n\setminus A_2$ is of type $G_{\delta}$ in $\mathbb{R}$. Clearly, $F_n\subseteq E_n\subseteq A_3$. Hence $A_3=\bigcup_{n\in\omega}E_n$ is of type $G_{\delta\sigma}$ in $\mathbb{R}$. Using similar arguments, one can also prove that $A_4$ is of type $G_{\delta\sigma}$ in $\mathbb{R}$. This completes the proof.
\end{proof}

We are going to show below that Kofner's quasi-metrization theorem (see \cite[Theorem 10]{Kof}) may fail in $\mathbf{ZF}$.  To do this, in the following definition, we translate condition (ii) of \cite[Theorem 10]{Kof} into our language for Hattori spaces.

\begin{definition}
\label{s2d11}
\emph{Kofner's quasi-metrization theorem for Hattori spaces} is the following statement:  For every subset $A$ of $\mathbb{R}$, the Hattori space $H(A)$ is non-archimedeanly quasi-metrizable if and only if there exists a family $\{R_n: n\in\omega\}$ of subsets of $\mathbb{R}$ such that $\mathbb{R}\setminus A=\bigcup_{n\in\omega}R_n$ and, for every $n\in\omega$, $R_n$ is closed under limits in $\mathbb{R}$ of increasing sequences. (See \cite[$(ii)\leftrightarrow (iii)$ of Theorem 10]{Kof}.)
\end{definition}

\begin{remark}
\label{s2r12}
Let $A\subseteq \mathbb{R}$. It follows from Corollary \ref{s2c9} that the following theorem is true in $\mathbf{ZF}$: if the Hattori space $H(A)$ is quasi-metrizable, then there exists a family $\{R_n: n\in\omega\}$ such that $\mathbb{R}\setminus A=\bigcup_{n\in\omega}R_n$ and, for every $n\in\omega$, $R_n$ is closed under limits of increasing sequences. That the converse may fail in $\mathbf{ZF}$ is shown in the forthcoming Theorem \ref{s2t11}.
\end{remark}

We recall that a set $X$ is called \emph{Dedekind-infinite} if there exists an injection $\psi: \omega\to X$. If a set is not Dedekind-infinite, it is called \emph{Dedekind-finite}. Basic properties of infinite Dedekind-finite subsets of $\mathbb{R}$ are given in \cite{Herr}. 

\begin{theorem}
\label{s2t11}
$[\mathbf{ZF}]$
If $D$ is an infinite Dedekind-finite subset of $\mathbb{R}$,  then the Hattori space $H(\mathbb{R}\setminus D)$ is not quasi-metrizable. In consequence, Kofner's quasi-metrization theorem is false in every model of $\mathbf{ZF}$ in which $\mathbb{R}$ contains an infinite Dedekind-finite set. In particular, Kofner's theorem is false in Cohen's original model $\mathcal{M}1$ in \cite{HR1}.
\end{theorem}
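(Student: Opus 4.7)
The reduction is immediate: with $A_1=\mathbb{R}\setminus D$, $A_4=D$, and $A_2=A_3=\emptyset$, Corollary~\ref{s2c9} says $H(\mathbb{R}\setminus D)$ is quasi-metrizable if and only if $D$ is of type $F_\sigma$ in $\mathbb{S}$. So the main task is to show that an infinite Dedekind-finite $D\subseteq\mathbb{R}$ cannot be written as $\bigcup_{n\in\omega}F_n$ with each $F_n$ closed in $\mathbb{S}$. I would argue via the central lemma: every $\mathbb{S}$-closed subset of $D$ is finite. Granting this, the disjointification $F_n\setminus\bigcup_{k<n}F_k$ gives a canonical enumeration of $D$ stage by stage in $\mathbb{R}$-order, producing a bijection $D\to\omega$ when $D$ is infinite, contradicting Dedekind-finiteness.

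Two $\mathbf{ZF}$-facts underpin the central lemma. First, any nonempty $\mathbb{S}$-closed set bounded below contains its infimum: if $a=\inf F\notin F$, the $\mathbb{S}$-open complement of $F$ would contain some $[a,a+\varepsilon)$, contradicting $a=\inf F$. Second, after fixing a bijection $q:\omega\to\mathbb{Q}$, the right-isolated points of $G\subseteq\mathbb{R}$, those $x\in G$ with $(x,x+\varepsilon_x)\cap G=\emptyset$ for the canonical supremal $\varepsilon_x$, inject canonically into $\omega$ by $x\mapsto$ the least $n$ with $q(n)\in(x,x+\varepsilon_x)$, since the intervals $(x,x+\varepsilon_x)$ are pairwise disjoint. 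Hence the right-isolated part of any $F\subseteq D$ is countable and Dedekind-finite, so finite. A canonical enumeration of $F\cap[-n,n]$ across $n$ also lets me assume $F$ is bounded.

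Now iterate the right-accumulation derivative $R(G)=\{x\in G:(x,x+\varepsilon)\cap G\ne\emptyset\text{ for every }\varepsilon>0\}$: set $F_0=F$, $F_{n+1}=R(F_n)$. Each $F_{n+1}$ is $\mathbb{S}$-closed and $F_n\setminus F_{n+1}$ is finite by the above. If $F\setminus\bigcap_n F_n$ were infinite, its canonical lexicographic enumeration across stages would inject $\omega$ into $D$; so $F_\infty:=\bigcap_n F_n$ is infinite with $F_\infty=R(F_\infty)$. Let $a=\min F_\infty$. Since $a\in R(F_\infty)$, recursively define a strictly decreasing sequence in $F_\infty\cap(a,+\infty)$ by $x_{n+1}=\min\bigl(F_\infty\cap[a+1/k_{n+1},\,x_n-1/k_{n+1}]\bigr)$ for the least $k_{n+1}\in\mathbb{N}$ making this set nonempty, with $x_1$ chosen analogously inside $[a+1/k_1,a+1]$. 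Existence of each minimum uses $\mathbb{S}$-closedness of the intersection, nonemptiness uses the right-accumulation of $a$, and the resulting injection $\omega\to F_\infty\subseteq D$ contradicts Dedekind-finiteness.

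The main obstacle is carrying out this derivation-and-descent entirely inside $\mathbf{ZF}$: every selection must be funnelled through the fixed bijection $q:\omega\to\mathbb{Q}$ and through canonical minima of $\mathbb{S}$-closed sets, not by countably many independent choices. Once the first part is in hand, the second follows at once. Taking $A=\mathbb{R}\setminus D$ in Definition~\ref{s2d11} and setting $R_0=D$, $R_n=\emptyset$ for $n\ge 1$, any strictly increasing infinite sequence in $D$ would be a countable infinite subset of the Dedekind-finite $D$ and so cannot exist; hence $R_0$ vacuously contains every $\mathbb{R}$-limit of its increasing sequences. The right-hand side of Kofner's biconditional is thus satisfied while the left-hand side has just been refuted, so Kofner's theorem fails. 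The standard fact, catalogued in \cite{HR1}, that Cohen's model $\mathcal{M}1$ contains an infinite Dedekind-finite subset of $\mathbb{R}$ yields the final sentence.
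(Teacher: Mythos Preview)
Your argument is correct and genuinely different from the paper's. The paper routes the first claim through Corollary~\ref{s2c013}: quasi-metrizability would force $D$ to be $G_{\delta\sigma}$ in $\mathbb{R}$, and then an external result (\cite[Corollary~2.11]{KW1}) is invoked to say that each $G_\delta$ piece of a Dedekind-finite subset of $\mathbb{R}$ must be finite, whence $D$ is countable, a contradiction. You instead stay with Corollary~\ref{s2c9} and prove directly, by a Cantor--Bendixson-style right-derivative iteration, that every $\mathbb{S}$-closed subset of $D$ is finite. Your route is longer but entirely self-contained in $\mathbf{ZF}$, avoiding the citation to \cite{KW1}; the paper's route is shorter but leans on that external lemma. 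Both then finish the Kofner part the same way, by observing that $D$ admits no increasing sequence with infinite range.

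One expository point deserves a sentence of justification: the assertion $F_\infty=R(F_\infty)$ is not automatic for such derivatives (in general the process must be continued transfinitely), and here it hinges on the case hypothesis that $F\setminus F_\infty$ is \emph{finite}. Indeed, if $x\in F_\infty$ had $(x,x+\varepsilon)\cap F_\infty=\emptyset$, then $(x,x+\varepsilon)\cap F\subseteq F\setminus F_\infty$ would be finite, so some $(x,x+\delta)\cap F=\emptyset$, contradicting $x\in F_1=R(F)$. Adding this one line closes the only gap. A very minor quibble: for the Kofner half, ``vacuously'' is not quite the right word, since eventually constant increasing sequences in $D$ do exist; the correct observation is that every increasing sequence in $D$ has finite range (else its range injects $\omega$ into $D$), hence is eventually constant with limit in $D$.
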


\begin{proof}
Let $D$ be an infinite Dedekind-finite subset of $\mathbb{R}$. Put $A=\mathbb{R}\setminus D$ and suppose that the Hattori space $H(A)$ is quasi-metrizable. Then it follows from Corollary \ref{s2c013} that the set $D$ is of type $G_{\delta\sigma}$ in $\mathbb{R}$. Thus, there exists a family $\{G_n: n\in\omega\}$ of $G_{\delta}$-sets in $\mathbb{R}$ such that $D=\bigcup_{n\in\omega}G_n$. Corollary 2.11 of \cite{KW1} states, among other things, that if an infinite subset $Y$ of a Cantor completely metrizable, second-countable space $\mathbf{X}$ is of type $G_{\delta}$ in $\mathbf{X}$, then $Y$ is Dedekind-infinite. Therefore, since $\mathbb{R}$ is a Cantor completely metrizable, second-countable space, and, for every $n\in\omega$, the set $G_n$ is Dedekind-finite and of type $G_{\delta}$ in $\mathbb{R}$, it follows that, for every $n\in\omega$, the set $G_n$ is finite. In $\mathbf{ZF}$, countable unions of finite subsets of $\mathbb{R}$ are countable. This implies that the set $\bigcup_{n\in\omega}G_n$ is countable. But this is impossible because $D$ is uncountable. The contradiction obtained shows that $H(A)$ is not quasi-metrizable. Since $D$ contains no infinite countable subsets, it is closed under limits of increasing sequences. Therefore, Kofner's quasi-metrization theorem is false in every model of $\mathbf{ZF}$ in which there is an infinite Dedekind-finite subset of $\mathbb{R}$.
\end{proof}

It seems that a hard problem is to find useful necessary and sufficient conditions for $\mathcal{A}$ to determine a metrizable $H_4(\mathcal{A})$. We recall that, in $\mathbf{ZF}$, the statement ``Every metrizable space has a $\sigma$-locally finite base'' is unprobvable (see \cite[Form 232]{HR1}, \cite{HKRS}, \cite{CHHKR}). However, it was observed, e.g., in \cite{CHHKR} that it follows from \cite[the proof of Theorem 7]{CR} that the following part of Nagata-Smirnov metrization theorem is true in $\mathbf{ZF}$:

\begin{theorem}
\label{s2t12}
$[\mathbf{ZF}]$ 
If a  $T_3$-space has a $\sigma$-locally finite base, it is metrizable.
\end{theorem}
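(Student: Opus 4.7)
The plan is to execute a choice-free version of the Nagata--Smirnov metrization argument, using a fixed $\sigma$-locally finite base $\mathcal{B}=\bigcup_{n\in\omega}\mathcal{B}_n$ of the given $T_3$-space $\mathbf{X}$ and a fixed bijection $q\colon\omega\to\mathbb{Q}$ as the only external data. The key observation is that, once $\mathcal{B}$ and $q$ are in hand, every subsequent selection (applications of regularity, of Urysohn's lemma, and enumerations of dyadic levels) can be made canonically, so no form of the axiom of choice is required.

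First I would establish that $\mathbf{X}$ is normal. Given disjoint closed sets $F$ and $G$, for each $n\in\omega$ put
$$U_n=\bigcup\{B\in\mathcal{B}_n:\cl_{\mathbf{X}}(B)\cap G=\emptyset\},\qquad V_n=\bigcup\{B\in\mathcal{B}_n:\cl_{\mathbf{X}}(B)\cap F=\emptyset\}.$$
Regularity combined with $\mathcal{B}$ being a base yields $F\subseteq\bigcup_n U_n$ and $G\subseteq\bigcup_n V_n$, while the ZF-provable identity that the closure of the union of a locally finite family equals the union of the closures gives $\cl_{\mathbf{X}}(U_n)\cap G=\emptyset$ and $\cl_{\mathbf{X}}(V_n)\cap F=\emptyset$. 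The standard zig-zag combinators
$$U=\bigcup_{n\in\omega}\bigl(U_n\setminus\cl_{\mathbf{X}}(V_0\cup\cdots\cup V_n)\bigr),\qquad V=\bigcup_{n\in\omega}\bigl(V_n\setminus\cl_{\mathbf{X}}(U_0\cup\cdots\cup U_n)\bigr)$$
then produce disjoint open neighbourhoods of $F$ and $G$.

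With normality in hand, for each ordered pair $(B,B')\in\mathcal{B}\times\mathcal{B}$ with $\cl_{\mathbf{X}}(B')\subseteq B$, the usual proof of Urysohn's lemma driven by the dyadic-rational enumeration obtained from $q$ yields a canonical continuous $f_{B,B'}\colon X\to[0,1]$ that vanishes on $\cl_{\mathbf{X}}(B')$ and equals $1$ off $B$. Because the construction is deterministic, the whole family $\{f_{B,B'}\}$ is simultaneously produced in ZF. I would then form the classical Nagata--Smirnov sum
$$d(x,y)=\sum_{n\in\omega}2^{-n}\min\bigl\{1,\sup\{|f_{B,B'}(x)-f_{B,B'}(y)|:B\in\mathcal{B}_n,\ B'\in\mathcal{B},\ \cl_{\mathbf{X}}(B')\subseteq B\}\bigr\}$$
and verify that $d$ is a metric inducing $\tau[\mathbf{X}]$; the family $\{f_{B,B'}\}$ separates points from closed sets by regularity and the base property, and local finiteness of each $\mathcal{B}_n$ is what keeps the $n$th summand continuous.

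The main obstacle I anticipate is the normality step, because a naive transcription of the classical Nagata--Smirnov argument enumerates each $\mathcal{B}_n$ using a well-ordering, which is unavailable in ZF. The plan sidesteps this by unioning over $\mathcal{B}_n$ at once and invoking the ZF-provable closure identity for locally finite families. Once normality is in hand, Urysohn's lemma and the metric construction proceed routinely in ZF given the fixed bijection $\omega\to\mathbb{Q}$, matching the observation in the excerpt that the argument of \cite{CR} adapts to this setting.
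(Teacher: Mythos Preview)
The paper does not give its own proof of this theorem; it simply records that the result follows in $\mathbf{ZF}$ from the proof of \cite[Theorem~7]{CR}, as observed in \cite{CHHKR}. Your direct Nagata--Smirnov style argument is therefore a different route from what the paper invokes. Your normality step and the observation that the Urysohn construction becomes canonical once the normality witness is canonical are both sound in $\mathbf{ZF}$.

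However, the displayed metric does not work as written. In the $n$th summand you take the supremum over all $B'\in\mathcal{B}$ with $\cl_{\mathbf{X}}(B')\subseteq B$, and this inner family is not controlled by the local finiteness of $\mathcal{B}_n$. For a concrete failure, take $X=[0,1]$ with a $\sigma$-locally finite base in which $\mathcal{B}_0=\{(0,1)\}$ and, for each $m\ge 2$, the interval $B'_m=(\tfrac{1}{m+1},1-\tfrac{1}{m+1})$ belongs to some $\mathcal{B}_m$. Then $\cl(B'_m)\subseteq(0,1)$, $f_{(0,1),B'_m}(0)=1$, and $f_{(0,1),B'_m}(\tfrac1m)=0$, so the $n=0$ summand of $d(0,\tfrac1m)$ equals $1$ for every $m$; thus $d$ does not induce the usual topology. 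The standard repair is to replace the supremum over $B'$ by a single canonical function per $B$: set $C_{B,m}=\bigcup\{\cl_{\mathbf{X}}(B'):B'\in\mathcal{B}_m,\ \cl_{\mathbf{X}}(B')\subseteq B\}$ (closed by local finiteness of $\mathcal{B}_m$), let $g_{B,m}$ be your canonical Urysohn function separating $C_{B,m}$ from $X\setminus B$, and put $f_B=\sum_m 2^{-m}g_{B,m}$. Then $f_B$ vanishes exactly off $B$, so $d_n(x,y)=\sup_{B\in\mathcal{B}_n}|f_B(x)-f_B(y)|$ really is a locally finite supremum, and $d=\sum_n 2^{-n}\min\{1,d_n\}$ metrizes $\mathbf{X}$.
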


We can state the following simple proposition:

\begin{proposition}
\label{s2p13}
$[\mathbf{ZF}]$
If the 4-cover $\mathcal{A}$ is such that $A_2$ is of type $F_{\sigma}$ in $\mathbb{R}$, and $A_3\cup A_4$ is countable, then the hybrid space $H_4(\mathcal{A})$ is metrizable.
\end{proposition}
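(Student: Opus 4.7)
The plan is to apply Theorem \ref{s2t12}: since $H_4(\mathcal{A})$ is already known to be a $T_3$-space in $\mathbf{ZF}$, it suffices to exhibit a $\sigma$-locally finite base. I would begin by observing that $\tau[\mathbb{R}]\subseteq\tau[H_4(\mathcal{A})]$; inspection of the four cases in the definition of $\mathcal{B}(x)$ shows that each Euclidean $\epsilon$-neighborhood of a point $x$ contains some basic $H_4(\mathcal{A})$-neighborhood of $x$, so every subset of $\mathbb{R}$ that is closed in the usual topology is also closed in $H_4(\mathcal{A})$.

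Using the $F_\sigma$ hypothesis, fix a decomposition $A_2=\bigcup_{n\in\omega}F_n$ with each $F_n$ closed in $\mathbb{R}$. Since $F_n\subseteq A_2$ and $\{x\}\in\tau[H_4(\mathcal{A})]$ for every $x\in A_2$, the set $F_n$ is open in $H_4(\mathcal{A})$, and together with closedness this makes it clopen. Put $\mathcal{D}_n=\{\{x\}:x\in F_n\}$; the family $\mathcal{D}_n$ is locally finite in $H_4(\mathcal{A})$, because for $y\notin F_n$ the open set $\mathbb{R}\setminus F_n$ is a neighborhood of $y$ disjoint from every member of $\mathcal{D}_n$, while for $y\in F_n$ the open singleton $\{y\}$ meets exactly one member of $\mathcal{D}_n$. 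To handle the remaining points, set $\mathcal{B}_1=\{(p,q):p,q\in\mathbb{Q},\, p<q\}$, fix enumerations of the countable sets $A_3$ and $A_4$, and put $\mathcal{B}_3=\{[x,x+\tfrac{1}{n}):x\in A_3,\, n\in\mathbb{N}\}$ and $\mathcal{B}_4=\{(x-\tfrac{1}{n},x]:x\in A_4,\, n\in\mathbb{N}\}$. Each of $\mathcal{B}_1,\mathcal{B}_3,\mathcal{B}_4$ is countable (since $\omega\times\mathbb{N}$ is provably countable in $\mathbf{ZF}$), hence trivially $\sigma$-locally finite. Consequently
\[
\mathcal{B}=\mathcal{B}_1\cup\mathcal{B}_3\cup\mathcal{B}_4\cup\bigcup_{n\in\omega}\mathcal{D}_n
\]
is $\sigma$-locally finite, and a short case analysis on which $A_i$ contains a given point $x$ shows that $\mathcal{B}$ is a base for $H_4(\mathcal{A})$. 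Theorem \ref{s2t12} then yields metrizability.

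The only step requiring genuine care is the local finiteness of each $\mathcal{D}_n$: it depends on $F_n$ being closed in $H_4(\mathcal{A})$ and not merely in $\mathbb{R}$, which is precisely where the $F_\sigma$ hypothesis on $A_2$ enters. No choice principle is invoked beyond what the hypotheses already supply, since the countability of $A_3$ and $A_4$ gives concrete enumerations and the $F_\sigma$ decomposition of $A_2$ is a named witness.
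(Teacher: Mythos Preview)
Your proposal is correct and follows exactly the approach the paper uses: construct a $\sigma$-locally finite base and invoke Theorem~\ref{s2t12}. The paper's own proof is a single sentence asserting that such a base exists, whereas you have carefully spelled out the construction (singletons over the $F_\sigma$ pieces of $A_2$, rational intervals for $A_1$, and the countable half-open families for $A_3$ and $A_4$) and verified local finiteness and the $\mathbf{ZF}$-soundness of each step; these details are precisely what the paper leaves to the reader.
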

\begin{proof}
Under the assumptions of the proposition, the space $H_4(\mathcal{A})$ has a $\sigma$-locally finite base, so it is metrizable by Theorem \ref{s2t12}.
\end{proof}

\begin{proposition}
\label{s2p14}
$[\mathbf{ZF}]$
Suppose that the 4-cover $\mathcal{A}$ is such that $A_2$ is countable. Then the following conditions are equivalent:
\begin{enumerate}
\item[(i)] $H_4(\mathcal{A})$ is second-countable;
\item[(ii)] $H_4(\mathcal{A})$ is metrizable;
\item[(iii)] $A_3\cup A_4$ is countable.
\end{enumerate}  
\end{proposition}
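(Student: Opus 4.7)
The plan is to close the cycle (iii) $\Rightarrow$ (ii) $\Rightarrow$ (i) $\Rightarrow$ (iii), with the countability of $A_2$ entering the first two arrows and playing no role in the third. For (iii) $\Rightarrow$ (ii), I will use the given injection $A_2\hookrightarrow\omega$ to enumerate $A_2=\{a_n:n\in N\}$ with $N\subseteq\omega$; since singletons are closed in $\mathbb{R}$, this presents $A_2=\bigcup_{n\in N}\{a_n\}$ as an $F_{\sigma}$-set in $\mathbb{R}$. Combined with the countability of $A_3\cup A_4$ from (iii), this is precisely the hypothesis of Proposition \ref{s2p13}, which immediately delivers metrizability.

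For (ii) $\Rightarrow$ (i), I will first show that $D=\mathbb{Q}\cup A_2$ is a countable dense subset of $H_4(\mathcal{A})$: countability is immediate in $\mathbf{ZF}$, and density is verified case by case against the four shapes of basic neighborhoods --- any $(x-\epsilon,x+\epsilon)$ meets $\mathbb{Q}$, each singleton $\{a\}$ with $a\in A_2$ lies in $D$, and each of $[x,x+\epsilon)$ and $(x-\epsilon,x]$ contains a rational in its open part. Fixing an enumeration $D=\{d_n:n\in\omega\}$ and any metric $d$ witnessing (ii), I will then show that $\{B_d(d_n,1/m):n,m\in\omega\}$ is a countable base by the standard triangle-inequality argument: for $x$ in an open $U$, choose $r>0$ with $B_d(x,r)\subseteq U$ and $m$ with $2/m<r$; density supplies some $d_n\in B_d(x,1/m)$, and then $x\in B_d(d_n,1/m)\subseteq U$. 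No choice principle is invoked because all selections are made by least-index search in the pre-existing enumeration.

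For (i) $\Rightarrow$ (iii), fix a countable base $\mathcal{B}=\{B_n:n\in\omega\}$ of $H_4(\mathcal{A})$. For each $x\in A_3$, the set $[x,x+1)$ is open in $H_4(\mathcal{A})$ and contains $x$, so there is a least $n(x)\in\omega$ with $x\in B_{n(x)}\subseteq[x,x+1)$; every element of $B_{n(x)}$ is then $\geq x$ and $x\in B_{n(x)}$, so $x$ is the minimum of $B_{n(x)}$, which forces the assignment $x\mapsto n(x)$ to be injective. Hence $A_3$ embeds in $\omega$, and the symmetric construction using $(x-1,x]$ handles $A_4$. The only subtle point of the whole argument is keeping each witness canonical (via given enumerations and least-index selection) so that the cycle goes through in $\mathbf{ZF}$; this is the only place where a naive proof would threaten to invoke a choice function, namely in the map $x\mapsto n(x)$ of the final step.
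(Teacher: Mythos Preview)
Your proof is correct and follows essentially the same route as the paper: separability via $\mathbb{Q}\cup A_2$ for (ii)$\Rightarrow$(i), the minimum-element injection into a countable base for (i)$\Rightarrow$(iii), and the $\sigma$-locally-finite/second-countable metrization theorem for the remaining implication. The only cosmetic difference is that you close the loop as (iii)$\Rightarrow$(ii)$\Rightarrow$(i)$\Rightarrow$(iii), invoking Proposition~\ref{s2p13} for the first arrow, whereas the paper establishes (i)$\Leftrightarrow$(ii) and (i)$\Leftrightarrow$(iii) directly; your version is more explicit about keeping all selections canonical in $\mathbf{ZF}$, but the substance is the same.
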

\begin{proof}
Since $A_2$ is countable, the space $H_4(\mathcal{A})$ is separable. Hence, if $H_4(\mathcal{A})$ is metrizable, it is also second-countable. Clearly, a subspace of the Sorgenfrey line is second-countable if and only if it is countable. Every second-countable $T_3$-space is metrizable by Theorem \ref{s2t12}, and, moreover, $A_2\cup A_3\cup A_4$ is countable if and only if $H_4(\mathcal{A})$ is second-countable. All this taken together completes the proof.
\end{proof}

Finally, we recall that even a linearly ordered normal space can fail to be completely regular in $\mathbf{ZF}$ (see, e.g., \cite{GT}), and a linearly orderable topological space may fail to be normal in $\mathbf{ZF}$ (see \cite[Form 118, p. 42]{HR1}); therefore, it makes sense to establish the following result:

\begin{proposition}
\label{s2p15}
$[\mathbf{ZF}]$ For every 4-cover $\mathcal{A}$ of $\mathbb{R}$, the space $H_4(\mathcal{A})$ is both normal and completely regular.
\end{proposition}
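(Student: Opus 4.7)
The plan is to prove complete regularity and normality separately by direct constructions, rather than deducing complete regularity from normality via Urysohn's lemma (whose standard proof requires some form of countable choice). Both arguments exploit the facts that $\tau[H_4(\mathcal{A})]$ refines $\tau[\mathbb{R}]$ and that every basic neighborhood in $H_4(\mathcal{A})$ is a subinterval of $\mathbb{R}$.

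For complete regularity, given $x\in\mathbb{R}$ and a $\tau[H_4(\mathcal{A})]$-closed set $F$ with $x\notin F$, we shall construct a continuous $f:H_4(\mathcal{A})\to[0,1]$ with $f(x)=0$ and $f[F]\subseteq\{1\}$ by splitting on the class of $x$. If $x\in A_1$, closedness of $F$ yields $\epsilon>0$ with $(x-\epsilon,x+\epsilon)\cap F=\emptyset$, and $f(z)=\min(1,|z-x|/\epsilon)$ is $\tau[\mathbb{R}]$-continuous, hence $\tau[H_4(\mathcal{A})]$-continuous. If $x\in A_2$, the clopen singleton $\{x\}$ makes $f=1-\chi_{\{x\}}$ work. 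If $x\in A_3$, choose $\epsilon>0$ with $[x,x+\epsilon)\cap F=\emptyset$ and set $f(z)=\min(1,(z-x)/\epsilon)$ for $z\geq x$ and $f(z)=1$ for $z<x$; the jump at $x$ is permissible in $\tau[H_4(\mathcal{A})]$ because basic neighborhoods of an $A_3$-point ignore the left side. Case $x\in A_4$ is symmetric.

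For normality, let $E,F$ be disjoint closed sets. For each $x\in E$, put
\[a(x)=\sup(\{-\infty\}\cup(F\cap(-\infty,x))),\quad b(x)=\inf(\{+\infty\}\cup(F\cap(x,+\infty))),\]
so $a(x)<x<b(x)$ and $(a(x),b(x))\cap F=\emptyset$. Let $l(x)$ be the midpoint of $a(x)$ and $x$ (or $x-1$ if $a(x)=-\infty$), and $r(x)$ the midpoint of $x$ and $b(x)$ (or $x+1$ if $b(x)=+\infty$). Define
\[U_x=\begin{cases}(l(x),r(x))&\text{if }x\in A_1,\\ \{x\}&\text{if }x\in A_2,\\ [x,r(x))&\text{if }x\in A_3,\\ (l(x),x]&\text{if }x\in A_4;\end{cases}\]
each $U_x$ is a $\tau[H_4(\mathcal{A})]$-open neighborhood of $x$ disjoint from $F$, openness being verified as in the proof of Theorem \ref{s2t5}. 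Define $\alpha(y),\beta(y),l'(y),r'(y),V_y$ symmetrically for $y\in F$ by swapping the roles of $E$ and $F$, and put $U=\bigcup_{x\in E}U_x$ and $V=\bigcup_{y\in F}V_y$.

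The decisive step is to show $U\cap V=\emptyset$. If $z\in U_x\cap V_y$, then $z\notin\{x,y\}$ (since $U_x\cap F=\emptyset=V_y\cap E$) and $x\neq y$; by symmetry assume $x<y$. This forces $x,y\notin A_2$, for otherwise $U_x\setminus\{x\}$ or $V_y\setminus\{y\}$ is empty. A short inspection of the remaining three forms for each of $U_x$ and $V_y$ shows that $z\in U_x\setminus\{x\}$ implies $z<r(x)$ and $z\in V_y\setminus\{y\}$ implies $z>l'(y)$. But $b(x)\leq y$ because $y\in F\cap(x,+\infty)$, and $\alpha(y)\geq x$ because $x\in E\cap(-\infty,y)$, whence
\[r(x)=\tfrac{x+b(x)}{2}\leq\tfrac{x+y}{2}\leq\tfrac{\alpha(y)+y}{2}=l'(y),\]
yielding $z<r(x)\leq l'(y)<z$, a contradiction. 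The hardest part will be this disjointness argument across all the class combinations for $x$ and $y$, but the midpoint inequality above handles every case uniformly; since the entire construction is canonical and no choices are made, the argument is entirely $\mathbf{ZF}$.
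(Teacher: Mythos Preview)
Your proof is correct and follows essentially the same plan as the paper's: case-by-case piecewise-linear separating functions for complete regularity, and for normality, canonically shrunk basic neighborhoods whose unions are shown disjoint by a midpoint comparison (the paper uses radii $\frac{1}{2n(c)}$ with $n(c)$ the least $n$ whose $\frac{1}{n}$-neighborhood misses the other set, while you use midpoints to the $\sup/\inf$ of the other set---a cosmetic difference). One small slip worth fixing: the blanket claim $a(x)<x<b(x)$ can fail (e.g.\ $a(x)=x$ when $x\in A_3$ and $F$ accumulates at $x$ from the left), but this is harmless since your $U_x$ never uses $l(x)$ for $A_3$-points or $r(x)$ for $A_4$-points, and the disjointness inequality only invokes the finite $r(x)$ and $l'(y)$.
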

\begin{proof}
Let $C_0, C_1$ be a pair of non-empty disjoint closed sets of $H_4(\mathcal{A})$.  Let $i\in\{0,1\}$ and $c\in C_i$. If  $c\in A_2$, we put $U_i(c)=\{c\}$. Assuming that  $c\in C_i\setminus  A_2$, we define the set $N(c)$ as follows:
\[
N(c)=\begin{cases} \{n\in\mathbb{N}: (c-\frac{1}{n}, c+\frac{1}{n})\cap C_{1-i}=\emptyset\} &\text{ if } c\in A_1;\\
\{n\in\mathbb{N}: [c, c+\frac{1}{n})\cap C_{1-i}=\emptyset\} &\text{ if } c\in A_3;\\
\{n\in\mathbb{N}: (c-\frac{1}{n}, c]\cap C_{1-i}=\emptyset\} &\text{ if } c\in A_4.\end{cases}\]
\noindent We put $n(c)=\min N(c)$ and define the set $U_i(c)$ as follows:
\[
U_i(c)=\begin{cases} (c-\frac{1}{2n(c)}, c+\frac{1}{2n(c)}) &\text{ if } c\in A_1;\\
 [c, c+\frac{1}{2n(c)}) &\text{ if } c\in A_3;\\
(c-\frac{1}{2n(c)}, c] &\text{ if } c\in A_4.\end{cases}\]

\noindent We define $V_i=\bigcup\{U_i(c): c\in C_i\}$. Clearly, $V_i\in\tau[H_4(\mathcal{A})]$ and $C_i\subseteq V_i$. Arguing in much the same way as in the proof of Theorem 2.3 in \cite{Rich1}, one can check that $V_0\cap V_1=\emptyset$. This proves that the space $H_4(\mathcal{A})$ is normal.

To show that the space $H_4(\mathcal{A})$ is completely regular, we consider any closed set $E$ in $H_4(\mathcal{A})$ and any point $x\in \mathbb{R}\setminus E$. We choose $U\in\mathcal{B}(x)$ such that $U\cap E=\emptyset$. Suppose that $x\in A_3$. Then, for some $\epsilon>0$, $U=[x, x+\epsilon)$. We define the function $f_x: \mathbb{R}\to [0,1]$ as follows:
\[
f_x(t)=\begin{cases} 1 &\text{ if } t\in (-\infty, x)\cup [x+\epsilon, +\infty);\\
0 &\text{ if } t\in [x, x+\frac{\epsilon}{2}];\\
\frac{2}{\epsilon}(t-x)-1 &\text{ if } t\in (x+\frac{\epsilon}{2}, x+\epsilon).\end{cases}\]
Then $f_x$ is a continuos function from $H_2(\mathcal{A})$ into $[0,1]$ such that $f_x(x)=0$ and $E\subseteq f_x^{-1}[\{1\}]$. Using similar arguments, one can show that if $x\in A_1\cup A_2\cup A_4$, then there exists a continuous function $f_x$ from $H_2(\mathcal{A})$ into $[0,1]$ such that $f_x(x)=0$ and $E\subseteq f_x^{-1}[\{1\}]$. Hence, $H_4(\mathcal{A})$ is completely regular.
\end{proof}

We would like to encourage readers to investigate other topological properties of the hybrid spaces $H_4(\mathcal{A})$ in $\mathbf{ZF}$.


\begin{thebibliography}{00}

\bibitem{B}  H.~R.~Bennett, Quasi-metrizability and the $\gamma$-space property in certain generalized ordered spaces,
Topology Proc. 4 (1979), 1--12.

\bibitem{BS} A.~Bouziad and E.~Sukhacheva, On Hattori spaces, Comment. Math. Univ. Carolin. 58 (2) (2017),
213--223.

\bibitem {ChH1} V.~A.~Chatyrko and Y.~Hattori, On reversible and bijectively related topological spaces, Topology
Appl. 201(2016), 432--440.

\bibitem{ChH2} V.~A.~Chatyrko and Y.~Hattori, A poset of topologies on the set of real numbers, Comment.
Math. Univ. Carolin. 54(2) (2013), 189--196.

\bibitem{CR} P.~J.~Collins and A.~W.~Roscoe, Criteria for metrisability, Proc. Amer. Math. Soc. 90 (1984), 631--640.

\bibitem{CHHKR} O.~De la Cruz, E.~J.~Hall, P.~Howard, K.~Keremedis and J.~E.~Rubin, Metric spaces and the axiom of choice,
MQL Math. Log. Q. 49 (2003), 455--466

\bibitem{En} R.~Engelking, General Topology, Sigma Series in Pure Mathematics 6, Heldermann, Berlin, 1989.

\bibitem{EnL}  R.~Engelking and D.~Lutzer, Paracompactness in ordered spaces, Fund. Math. 94 (1976), 25--33.

\bibitem{FL}  P.~Fletcher and W.~F.~Lindgren, Quasi-Uniform Spaces, Marcel Dekker, New York, 1982.

\bibitem{GT} C.~Good and I.~Tree, Continuing horrors of topology without choice, Topology Appl. 63 (1995), 79--90.

\bibitem{Gr} G.~Gruenhage, Generalized metric spaces, in: K.~Kunen, J.~E.~Vaughan (Eds.), Handbook
of set-theoretic topology, North-Holland, Amsterdam, 1984, 423--501.

\bibitem{Hatt} Y.~Hattori, Order and topological structures of posets of the formal balls on metric spaces,
Mem. Fac. Sci. Eng. Shimane Univ. Ser. B, Math. Sci. 43 (2010), 13--26.

\bibitem{Herr} H.~Herrlich, Axiom of Choice, Lecture Notes in Mathematics 1876, Springer, New York, 2006.

\bibitem{HKRS}  P.~Howard, K.~Keremedis, J.~E.~Rubin and A.~Stanley, Paracompactness of metric spaces and the axiom of
multiple choice, MQL Math. Log. Q. 46 (2000), 219-–232.

\bibitem{HR1} P.~Howard and J.~E.~Rubin, Consequences of the axiom of choice, Math. Surveys and Monographs 59, A.M.S.,
Providence R.I., 1998.

\bibitem{KW1} K.~Keremedis and E.~Wajch, $k$-Spaces, sequential spaces and related topics in the absence of the axiom of choice, Topology Appl. 318 (2022), article ID 108199, 28 pp.

\bibitem{Kof} J.~Kofner, On quasi-metrizability, Topology Proc. 5 (1980), 111--138.

\bibitem{Kul} J.~Kulesza, Results on spaces between the Sorgenfrey and usual topologies on R, Topology
Appl. 231 (2017), 266--275.

\bibitem{Lutz1} D.~J.~Lutzer, On generalized ordered spaces, Dissertationes Math. 80 (1971), 32 pp.

\bibitem{Lutz} D.~J.~Lutzer, Ordered topological spaces, Surveys in general topology, pp. 247--295,
Academic Press, New York-London-Toronto, Ont., 1980.

\bibitem{Rich1} T.~Richmond, Hybrid topologies on the real line, Appl. Gen. Topol. 24, no. 1 (2023), 157--168.

\bibitem{Rich2}  T.~Richmond, General Topology: An Introduction, De Gruyter, Berlin, 2020.

\bibitem{Will}  S.~Willard, General Topology, Addison-Wesley Series in Math., Addison-Wesley Publishing Co., Reading,
Massachusetts, 1970.

\bibitem{Wils} W.~A.~Wilson, On quasi-metric spaces. Amer. J. Math. 53 (1931), 675--684.
\end{thebibliography}
\end{document}